\newcommand{\Ueberschrift}{\'Etale contractible varieties in positive characteristic}
\newcommand{\Kurztitel}{\'Etale contractible varieties in positive characteristic}
\DeclareMathOperator{\rH}{H}
\DeclareMathOperator{\ra}{a}
\newcommand{\bA}{{\mathbb A}}
\newcommand{\bC}{{\mathbb C}}
\newcommand{\bF}{{\mathbb F}}
\newcommand{\bG}{{\mathbb G}}
\newcommand{\bP}{{\mathbb P}}
\newcommand{\bQ}{{\mathbb Q}}
\newcommand{\bZ}{{\mathbb Z}}
\newcommand{\cH}{{\mathscr H}}
\newcommand{\cU}{{\mathscr U}}
\newcommand{\cX}{{\mathscr X}}
\newcommand{\cY}{{\mathscr Y}}
\newcommand{\dO}{{\mathcal O}}
\DeclareSymbolFont{cyrletters}{OT2}{wncyr}{m}{n}
\DeclareMathSymbol{\Sha}{\mathalpha}{cyrletters}{"58}
\newcommand{\one}{\mathbf{1}}
\newcommand{\surj}{\twoheadrightarrow} 
\newcommand{\inj}{\hookrightarrow}
\DeclareMathOperator{\Hom}{Hom}
\DeclareMathOperator{\coker}{coker}
\DeclareMathOperator{\Spec}{Spec}
\DeclareMathOperator{\Pic}{Pic}
\DeclareMathOperator{\Br}{Br}
\DeclareMathOperator{\NS}{NS}
\DeclareMathOperator{\R}{R}
\newcommand{\et}{\text{\rm \'et}}
\newcommand{\ab}{{\rm ab}}
\newtheorem{thm}{Theorem}
\newtheorem{prop}[thm]{Proposition}
\newtheorem{lem}[thm]{Lemma}
\theoremstyle{definition}
\theoremstyle{remark}
\newtheorem{rmk}[thm]{Remark}
\newtheorem{ex}[thm]{Example}
\newenvironment{pro*}[1][Proof]{{\it{#1:}} }{}
\newenvironment{pro**}[1][]{{\it{#1}} }{\hfill $\square$}
\numberwithin{equation}{section}
\newcounter{absatzcounter}[section]
\newlist{enumer}{enumerate}{2}
\setlist[enumer]{label=(\roman*),align=left,labelindent=0pt,leftmargin=*,widest = (iii)}
\newlist{enumerar}{enumerate}{1}
\setlist[enumerar]{label=\arabic*.,align=left,labelindent=0pt,leftmargin=*,widest = 8.}
\newlist{enumera}{enumerate}{2}
\setlist[enumera]{label=(\arabic*),align=left,labelindent=0pt,leftmargin=*,widest = (8)}
\newlist{enumeral}{enumerate}{2}
\setlist[enumeral]{label=(\alph*),align=left,labelindent=0pt,leftmargin=*,widest = (m)}
\begin{document}

\hrule width\hsize
\hrule width\hsize

\vskip 0.9cm

\title[\Kurztitel]{\Ueberschrift} 
\author{Armin Holschbach}
\address{Armin Holschbach, Mathematisches  Institut, Universit\"at Heidelberg, Im Neuenheimer Feld 288, 69120 Heidelberg, Germany}
\email{holschbach@mathi.uni-heidelberg.de}
 
\author{Johannes Schmidt}
\address{Johannes Schmidt, Mathematisches  Institut, Universit\"at Heidelberg, Im Neuenheimer Feld 288, 69120 Heidelberg, Germany}
\email{jschmidt@mathi.uni-heidelberg.de}

\author{Jakob Stix}
\address{Jakob Stix, MATCH - Mathematisches  Institut, Universit\"at Heidelberg, Im Neuenheimer Feld 288, 69120 Heidelberg, Germany}
\email{stix@mathi.uni-heidelberg.de}

\thanks{Supported by DFG-Forschergruppe 1920 "Symmetrie, Geometrie und Arithmetik", Heidelberg--Darmstadt} 

\subjclass[2000]{14F35}
\keywords{\'etale homotopy theory, \'etale contractible}
\date{\today} 

\maketitle

\begin{quotation} 
\noindent \small {\bf Abstract} --- Unlike in characteristic $0$, there are no non-trivial smooth varieties
over an algebraically closed field $k$ of characteristic $p>0$  that are contractible in the sense of \'etale homotopy theory. 
\end{quotation}




\section{Introduction}

Homotopy theory is founded on the idea of contracting the interval, either as a space, or as an actual homotopy, i.e., a path in a space of maps. In algebraic geometry,  the affine line $\bA_k^1$ serves as an algebraic equivalent of the interval, at least in characteristic $0$, where $\bA^1_k$ is contractible.

Matters differ in characteristic $p>0$ where $\pi_1(\bA_k^1)$ is an infinite group: a group $G$ occurs as a finite quotient of $\pi_1(\bA^1_k)$ precisely if $G$ is a 
quasi-$p$-group 
due to Abhyankar's conjecture for the affine line as proven by Raynaud. 
This raises the question whether there is an \'etale contractible variety in positive characteristic. 

\begin{thm} \label{thm:maincontractible}
Let $k$ be an algebraically closed field of characteristic $p > 0$ and let $U/k$ be a smooth variety. Then $U$ is \'etale contractible, if and only if $U = \Spec(k)$ is the point.
\end{thm}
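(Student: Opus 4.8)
The ``if'' direction is immediate: $\Spec(k)$ has the \'etale homotopy type of a point. So the plan is to prove the converse. Since a contractible space is connected, I may assume $U$ is connected; and a smooth connected variety of dimension $0$ over the algebraically closed field $k$ is already $\Spec(k)$. So I would suppose $\dim U = n \ge 1$ and aim for a contradiction. The input I would use is that \'etale contractibility forces $\pi_1^{\et}(U)=1$ and $\rH^i_\et(U,A)=0$ for every $i>0$ and every finite abelian group $A$; it therefore suffices to exhibit a single nonzero higher \'etale cohomology group. I would split into two cases according to whether $U$ carries a nonconstant global function, i.e. whether $\Gamma(U,\OO_U)\ne k$ or $\Gamma(U,\OO_U)=k$. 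This dichotomy seems forced: for $\bA^n$ only the $p$-part of cohomology is nonzero, while for $\bP^n$ only the prime-to-$p$ part is, so no single coefficient group can work uniformly.

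In the case $\Gamma(U,\OO_U)\ne k$ I would use the Artin--Schreier sequence $0\to\bZ/p\to\bG_a\xrightarrow{F-1}\bG_a\to0$ on the small \'etale site, whose cohomology in $\bG_a$ is the coherent cohomology of $\OO_U$. The resulting long exact sequence gives an injection $\coker\!\big(F-1:\Gamma(U,\OO_U)\to\Gamma(U,\OO_U)\big)\hookrightarrow \rH^1_\et(U,\bZ/p)$, so it is enough to show that $F-1$ is not surjective on the ring $R:=\Gamma(U,\OO_U)$ whenever $R\ne k$. This is the arithmetic heart of the argument, and I would isolate it as a lemma. To prove it, pick a transcendental $f\in R$ and suppose every element of $R$ lies in the image of $F-1$. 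Feeding $f$ into the Artin--Schreier recursion produces $g_0=f,g_1,g_2,\dots\in R$ with $g_i=g_{i+1}^p-g_{i+1}$; at the unique place above $\infty$ of $k(f)$ each $g_i$ has a pole of order prime to $p$, so each $k(f)(g_1,\dots,g_{i+1})/k(f)$ is a nontrivial Artin--Schreier tower of degree $p^{i+1}$, all contained in the relative algebraic closure $L$ of $k(f)$ in $\operatorname{Frac}(R)$. But $L$ is finite over $k(f)$, since an intermediate field of a finitely generated field extension that is algebraic over the base is finite over it; this contradicts the unbounded degrees. Hence $F-1$ is not surjective and $\rH^1_\et(U,\bZ/p)\ne0$.

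In the case $\Gamma(U,\OO_U)=k$, Artin--Schreier gives nothing in degree $1$, and I would pass to prime-to-$p$ coefficients. If $U$ is proper it is smooth projective connected, so the trace map yields $\rH^{2n}_\et(U,\bZ/\ell)\ne0$ for any $\ell\ne p$, contradicting contractibility. The remaining, and genuinely hard, case is $U$ non-proper with no nonconstant functions. Here I would spread $U$ out and reduce, by a base-change argument, to a model $U_0$ over a finite field $\bF_q$ with $U=U_{0,k}$; if all higher $\ell$-adic cohomology vanished then Poincar\'e duality would concentrate $\rH^\ast_c$ in degree $2n$ with value $\bQ_\ell(-n)$, so the Grothendieck--Lefschetz trace formula would force $\#U_0(\bF_{q^m})=q^{mn}$ for all $m$, exactly as for $\bA^n$. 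I would then use that $\Gamma(U,\OO_U)=k$ and $n\ge1$ force $U$ to be non-affine, and expect $U$ to contain a complete curve $C$ whose cycle class pairs nontrivially, against $c_1$ of a suitable line bundle, with $\rH^2_\et(U,\bQ_\ell(1))$, forcing the latter to be nonzero.

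The main obstacle is precisely this last case: producing the obstruction for a non-proper smooth variety with only constant global functions, where resolution of singularities is unavailable in characteristic $p$ and one cannot simply choose a good normal-crossings compactification. Controlling the boundary --- equivalently, guaranteeing a complete curve inside $U$ and the non-degeneracy of its cycle class in $\rH^2$ --- is the step I expect to require the most care; the point-counting reformulation over $\bF_q$ is a useful cross-check but does not by itself obviously rule out such a $U$.
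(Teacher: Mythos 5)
Your first case and your proper subcase are correct, and the first case is in substance the paper's own mechanism: your Artin--Schreier tower (pole orders prime to $p$ at the place over infinity, unbounded degrees inside the relative algebraic closure $L$ of $k(f)$) is a self-contained proof of what the paper packages as Proposition~\ref{prop:noh0}, namely that finiteness of $\rH^1_\et(U,\bF_p)$ forces $\rH^0(U,\dO_U)=k$, via the infinitude of $\pi_1^\ab(\bA^1_k)\otimes\bF_p$. The genuine gap is exactly the case you flag yourself: $U$ non-proper with $\rH^0(U,\dO_U)=k$. This is not a residual technicality to be patched later --- it is the entire content of the theorem in positive characteristic (already $\bP^n$ minus a codimension-$2$ subset is of this type), and your proposal contains no proof of it.

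Moreover, the mechanisms you sketch for that case cannot close it as stated. Point counting cannot, as you concede, since $\bA^n$ realizes the same counts. The cycle-class idea runs into a structural obstacle: you are arguing by contradiction, so you may assume $\rH^2_\et(U,\mu_\ell)=0$; by the Kummer sequence $\Pic(U)$ is then $\ell$-divisible, hence $\deg(L|_C)\in\bigcap_m\ell^m\bZ=\{0\}$ for \emph{every} line bundle $L$ on $U$ and \emph{every} complete curve $C\subseteq U$. So the nontrivial pairing you hope to exhibit provably does not exist under the hypotheses in force; to succeed along these lines you would have to show that the remaining hypotheses force such a pair into existence, and the purely geometric version of that claim (non-proper plus only constant functions implies a complete curve on which some line bundle has nonzero degree) is false --- in characteristic $0$ the complement of the distinguished section in $\bP(E)$, with $E$ the nonsplit extension of $\dO$ by $\dO$ on an elliptic curve, has only constant functions and contains no complete curve at all. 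The paper instead turns the $\ell$-divisibility of $\Pic(U)$ from an obstacle into the weapon: $U$ smooth has a big Cartier divisor $D$ (Proposition~\ref{prop:existbig}); take a de Jong alteration $h\colon\tilde U\to U$ with $\tilde U$ open in a smooth projective $\tilde X$; by the theorem of the base $\NS(\tilde X)$ is finitely generated, so the $\ell$-divisible group $\Pic(U)$ has finite image in $\coker\big(\Pic^0(\tilde X)\to\Pic(\tilde U)\big)$, whence a multiple of an extension $\tilde D$ of $h^\ast D$ is numerically equivalent to a divisor $B$ supported on $\tilde X\setminus\tilde U$; bigness is a numerical property, so $B$ is big, yet all sections of $\dO_{\tilde X}(nB)$ restrict to regular functions on the locus where $h$ is finite, and these are constant because $\rH^0(U,\dO_U)=k$, contradicting bigness unless $\dim U=0$ (Proposition~\ref{prop:main}). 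This treats proper and non-proper $U$ uniformly and needs neither resolution of singularities, nor point counts, nor any complete curve inside $U$.
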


It turns out that our discussion in positive characteristic depends only on $\rH^1$ and $\rH^2$.
By the \'etale Hurewicz and Whitehead theorems (see \cite[\S4]{artinmazur}) we might therefore replace ``\'etale contractible'' with ``\'etale $2$-connected'' in Theorem~\ref{thm:maincontractible}.
Further, our proof covers more than just smooth varieties.
Here is the more precise result which proves Theorem~\ref{thm:maincontractible} because smooth varieties have big Cartier divisors.

\begin{thm} \label{thm:main2connected}
Let $k$ be an algebraically closed field of characteristic $p > 0$ and let $U/k$ be a normal variety such that 
\begin{enumer}
\item
the group $\rH^1_\et(U,\bF_p)$ vanishes, and
\item 
there is a prime number $\ell \not= p$ such that $\rH^2_\et(U,\mu_\ell) =  0$,
\end{enumer}
and one of the following: $U$ has a big Cartier divisor, or $\dim(U) \leq 2$.  Then $U$ has dimension $0$.
\end{thm}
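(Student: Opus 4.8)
The plan is to argue by contradiction: assume $\dim(U) \geq 1$ and produce a violation of (i) or (ii). First I would fix a normal projective compactification $U \hookrightarrow X$ with boundary $Z = X \setminus U$, written as a union of prime divisors $Z = Z_1 \cup \dots \cup Z_r$, chosen so that the given big Cartier divisor is the restriction of a big line bundle $\bar L$ on $X$; when $\dim(U) \leq 2$ a completion of a normal variety is automatically projective, so such an $X$ and even an ample $\bar L$ exist for free, which is exactly why that hypothesis can replace the existence of a big divisor. I would then translate the two cohomological assumptions. By the Kummer sequence $\Pic(U)/\ell \hookrightarrow \rH^2_\et(U,\mu_\ell)$, and using $\mu_\ell$-purity (valid since $\ell \neq p$) to pass to the regular locus, (ii) forces the Weil divisor class group of $U$ to be $\ell$-divisible. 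By the Artin--Schreier sequence $0 \to \bF_p \to \cO_U \xrightarrow{F-1} \cO_U \to 0$, where $F$ is the $p$-power Frobenius, the vanishing (i) forces $F-1$ to be surjective on $\Gamma(U,\cO_U)$.

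Next (the step that uses (ii)) I would show the boundary must carry the positivity. The localization sequence for Weil divisors reads
\[
\bigoplus_{i=1}^r \bZ\,[Z_i] \longrightarrow \mathrm{Cl}(X) \longrightarrow \mathrm{Cl}(U) \longrightarrow 0,
\]
so $\ell$-divisibility of $\mathrm{Cl}(U)$ means that $\NS(X)/\ell$ is generated by the boundary classes $[Z_1],\dots,[Z_r]$. Since $\NS(X)$ is finitely generated, the subgroup they generate has finite index prime to $\ell$; in particular the $[Z_i]$ span $\NS(X)_\bQ$. As $[\bar L]$ is big, hence a non-torsion class, it lies in this span, so some numerical relation $N[\bar L] \equiv \sum_i n_i [Z_i]$ holds. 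Discarding the negative coefficients only adds an effective divisor, so the effective boundary divisor $P := \sum_{n_i > 0} n_i Z_i$ is numerically $N[\bar L]$ plus an effective class and is therefore itself big. (In the proper case $Z = \varnothing$ this already finishes the argument: $\NS(X)$ would be finite of order prime to $\ell$, contradicting that the big class $[\bar L]$ has infinite order.)

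Then (the step that uses (i)) I would derive a nontrivial Artin--Schreier cover. Because $P$ is big, effective, and supported on $Z$, the spaces $\rH^0(X,\cO_X(mP))$ grow like $m^{\dim(U)}$ and consist of rational functions with poles only along $Z$, that is, of regular functions on $U$; in particular $\Gamma(U,\cO_U) \neq k$. Filtering by pole order along a fixed component $Z_i$ and noting that $F$ multiplies pole orders by $p$, the image of $F-1$ can only reach functions whose leading pole order along $Z_i$ is divisible by $p$. Exhibiting one function in $\Gamma(U,\cO_U)$ whose pole order along some $Z_i$ is prime to $p$ then shows $F-1$ is not surjective, whence $\rH^1_\et(U,\bF_p) \neq 0$, contradicting (i). This contradiction forces $\dim(U)=0$, i.e.\ $U = \Spec(k)$.

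The main obstacle is this last step: guaranteeing that the abundant functions produced by the big boundary divisor genuinely break surjectivity of $F-1$, i.e.\ that not all attainable pole orders along the boundary are divisible by $p$. This is precisely where the wildness of characteristic $p$ enters --- it is the same phenomenon that makes $\rH^1_\et(\bA^1,\bF_p)$ infinite --- and excluding the degenerate ``$p$-closed'' alternative is the crux. A secondary technical point is keeping the passage between $\Pic$, the Weil class group, and $\NS(X)$ honest on the possibly singular normal model $X$, where resolution is unavailable in general; restricting to the regular locus and invoking $\mu_\ell$-purity for the $\ell$-part, while treating the $p$-part directly through $F-1$ as above, is how I would circumvent the absence of resolution.
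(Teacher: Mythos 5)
Your skeleton for the big-divisor case --- use $\ell$-divisibility of divisor classes plus finite generation of N\'eron--Severi to push all positivity onto boundary classes, then contradict (i) through functions with boundary poles --- is the same skeleton as the paper's Proposition~\ref{prop:main}, but three of your steps have genuine gaps, and they are exactly the points the paper's proof is engineered to avoid. First, you start from a normal \emph{projective} compactification $X$ of $U$ on which the given big Cartier divisor extends to a big class $\bar L$. No such $X$ is available in general: Nagata's theorem produces only a \emph{proper} compactification, resolution of singularities is unknown in characteristic $p$ in dimension $\geq 4$, and even when a projective $X$ exists, extending sections across the boundary (and the fact that bigness is a numerical property) requires the boundary components to be ($\bQ$-)Cartier. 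This is precisely why the paper passes to a de Jong alteration $h:\tilde U \to U$ with smooth projective closure $\tilde X$, and then has to re-prove constancy of functions upstairs (Step 1 of Proposition~\ref{prop:main}), since the cohomological hypotheses do not transfer along $h$. Your substitute device on the singular model --- ``$\mu_\ell$-purity'' to pass from $\Pic(U)$ to the Weil class group --- is also unjustified: absolute purity needs a regular ambient scheme, and on a normal $X$ the Weil classes $[Z_i]$ need not even define elements of $\NS(X)$.

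Second, your disposal of the case $\dim(U)\leq 2$ rests on a false statement: a completion of a normal surface is \emph{not} automatically projective. The paper cites the counterexamples (Nagata, Schr\"oer: proper normal non-projective surfaces, even with trivial Picard group), and this is the entire reason Section~\ref{sec:surfaces} exists; there one spreads $U \subset X$ out over a finite-type $\bF_p$-scheme, uses Deligne's generic base change theorem to propagate $\rH^2_\et(\cdot,\mu_\ell)=0$ to geometric fibres over $\overline{\bF}_p$, invokes Artin's theorem that proper normal surfaces over $\overline{\bF}_p$ \emph{are} projective to produce an ample divisor supported on the boundary of a special fibre, and lifts ampleness back to $X$ via EGA~III. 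None of this is in your proposal. Third, the step you yourself call ``the crux'' is indeed unresolved, and your proposed repair (exhibit a function whose pole order along some $Z_i$ is prime to $p$) is not clearly achievable: a priori the semigroup of attainable pole orders could lie in $p\bZ$. Two ways to close it: the paper's Proposition~\ref{prop:noh0}, where vanishing (even finiteness) of $\rH^1_\et(U,\bF_p)$ forbids \emph{any} nonconstant regular function, because a dominant map $U \to \bA^1_k$ would map $\pi_1^\ab(U)\otimes\bF_p$ onto a finite-index subgroup of the infinite group $\pi_1^\ab(\bA^1_k)\otimes\bF_p$ --- after which the sections of the big boundary divisor give an immediate contradiction; or an elementary iteration avoiding $\pi_1$ altogether: if $F-1$ is surjective on $\Gamma(U,\dO_U)$ and $f$ has a pole of order $n>0$ along a divisorial valuation $v$ of a normal compactification, then $f=g^p-g$ forces $-v(g)=n/p$, and iterating gives $p^m \mid n$ for all $m$, which is absurd. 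As written, your proof does not close without one of these repairs.
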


In order to show the range of varieties to which Theorem~\ref{thm:main2connected} applies, 
we list in Proposition~\ref{prop:existbig} properties of varieties that imply the existence of big Cartier divisors, 
including quasi-projective varieties and locally $\bQ$-factorial (in particular smooth) varieties. 
The proof of Theorem~\ref{thm:main2connected} in the presence of a big Cartier divisor will be given in 
Section~\S\ref{sec:proof}.  The case of normal surfaces will be treated in Section~\S\ref{sec:surfaces}.

\smallskip

In the proof of Theorem~\ref{thm:main2connected} one would like to work with a compactification $U \subseteq X$ and the geometry of line bundles on $U$ versus $X$. For that strategy to work, we need a compactification that is locally factorial along $Y = X \setminus U$. Since in characteristic $p  > 0$ resolution of singularities is presently absent in dimension $\geq 4$, we resort to desingularisation by alterations due to de Jong. Unfortunately, the alteration typically destroys the \'etale contractibility assumption. 
We first deduce more \textit{coherent} properties from \'etale $2$-connectedness that transfer to the alteration.

The key difference with characteristic $0$ comes from Artin--Schreier theory relating $\rH^1_\et(U,\bF_p)$ to regular functions on $U$. 

\begin{rmk} 
Some further comments illustrate the situation in characteristic $0$ in contrast to Theorem~\ref{thm:maincontractible}.
\begin{enumera}
\item
There are contractible complex smooth surfaces other than $\bA^2_{\bC}$, the first such example is due to 
Ramanujam \cite[\S3]{ramanujam:contractible}, see also tom Dieck and Petrie \cite{tomdieckpetrie} for explicit equations.  All of them are affine and have rational smooth projective completions.
\item
Smooth varieties $U/\bC$ different from affine space $\bA^n_\bC$ but with $U(\bC)$ diffeomorphic to $\bC^n$ are known as exotic algebraic structures on $\bC^n$. These varieties are contractible and we recommend the Bourbaki talk on $\bA^n$ by Kraft \cite{kraft:bourbaki}, or the survey by Za{\u\i}denberg \cite{zaidenberg:exotic}. A remarkable non-affine (but quasi-affine) example $U$ was obtained by  Winkelmann \cite{winkelmann} as a quotient $U= \bA^5/\bG_{\ra}$ and more concretely as the complement in a smooth projective quadratic hypersurface in $\bP^5_\bC$ of the union of a hyperplane and a smooth surface.
\item
The notion of $\bA^1$-contractibility is a priori stronger than contractibility in the complex topology. In \cite{asokdoran} Asok and Doran construct for every $d \geq 6$ continuous families of pairwise non-isomorphic, non-affine smooth varieties of dimension $d$ that are even $\bA^1$-contractible.
\end{enumera}
\end{rmk}

\subsection*{Notation} 
We keep the following notation throughout the note: $k$ will be an algebraically closed field. 
By definition, a variety over $k$ is a separated scheme of finite type over $k$. We will denote the \'etale fundamental group by $\pi_1$ and its maximal abelian quotient by $\pi_1^\ab$. The sheaf $\mu_\ell$ for $\ell$ different from the characteristic denotes the (locally) constant sheaf of $\ell$-th roots of unity.


\subsection*{Acknowledgments} 
We would like to thank Alexander Schmidt and Malte Witte for comments on an earlier version of the paper.


\section{Big Cartier divisors} 
\label{sec:bigcartier}

\subsection{Existence of big divisors} 
\label{sec:big}

Recall that a Cartier divisor $D$ on a normal (but not necessarily proper) variety $U/k$ is big if the rational map associated to the linear system $|mD|$  is generically finite for $m \gg 0$. 

\begin{prop} \label{prop:existbig}
Let $k$ be an algebraically closed field and let $U/k$ be a normal variety such that one of the following holds:
\begin{enumeral}
\item \label{propitem:quasiprojective}
$U$ is quasi-projective.
\item \label{propitem:product}
$U$ is a product of varieties with big divisors.
\item \label{propitem:locallyfactorial}
$U$ is locally $\bQ$-factorial everywhere.
\end{enumeral}
Then $U$ has a big Cartier divisor.
\end{prop}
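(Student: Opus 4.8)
The plan is to treat the three cases separately, relying throughout on the elementary principle that if a Cartier divisor $D$ admits, for some $m$, a linear subsystem $W \subseteq \rH^0(U,\cO_U(mD))$ whose associated rational map $\phi_W \colon U \dashrightarrow \bP(W^\vee)$ is generically finite (indeed birational onto its image), then the complete system $|mD|$ defines a generically finite map as well: $\phi_W$ is the composite of $\phi_{|mD|}$ with a linear projection, so the fibres of $\phi_{|mD|}$ are contained in those of $\phi_W$ and are therefore generically finite too. Thus in each case it suffices to exhibit one $D$ together with a subsystem cut out by suitable ``coordinates''.

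The first two cases are routine. In case \ref{propitem:quasiprojective} I would fix a locally closed immersion $U \hookrightarrow \bP^n_k$ and set $D = H|_U$ for a hyperplane $H$; then $\cO_U(mD) = \cO_{\bP^n}(m)|_U$, and the subsystem coming from $\rH^0(\bP^n,\cO(m))$ realises the restriction to $U$ of the $m$-th Veronese embedding, which is an immersion, so $D$ is big already for $m=1$. In case \ref{propitem:product}, for $U = U_1 \times_k U_2$ with big Cartier divisors $D_i$ on the (necessarily normal) factors $U_i$, I would take $D = \pr_1^{\ast} D_1 + \pr_2^{\ast} D_2$. The Künneth formula for global sections over a field gives $\rH^0(U,\cO_U(mD)) = \rH^0(U_1,\cO(mD_1)) \otimes_k \rH^0(U_2,\cO(mD_2))$, and the associated rational map is the Segre composite of $\phi_{|mD_1|}$ and $\phi_{|mD_2|}$; since a product of generically finite maps is generically finite, its image has dimension $\dim U_1 + \dim U_2 = \dim U$ for $m \gg 0$, so $D$ is big.

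Case \ref{propitem:locallyfactorial} is the substantive one, and where I expect the real work, since here $U$ need not be quasi-projective and there is no ample class to restrict. I would first choose a nonempty affine open $U' \subseteq U$, embed it in some $\bP^n_k$, and let $V = \overline{U'}$ be its projective closure, a projective variety birational to $U$. Because $U$ is normal and $\bP^n$ is proper, the resulting rational map $\psi \colon U \dashrightarrow V \subseteq \bP^n$ is defined on an open $U_0 \subseteq U$ whose complement has codimension $\geq 2$: every codimension-one point of a normal variety gives a discrete valuation ring, and the valuative criterion of properness extends $\psi$ there. Pulling back $\cO_{\bP^n}(1)$ yields a Cartier divisor $D_0$ on $U_0$, whose Zariski closure I regard as a Weil divisor $\overline{D_0}$ on $U$, legitimate because Weil divisors on a normal variety do not see the codimension-$\geq 2$ locus.

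The decisive step is to turn $\overline{D_0}$ into a genuine Cartier divisor, and this is exactly where local $\bQ$-factoriality enters: each point has a neighbourhood on which some positive multiple of $\overline{D_0}$ is Cartier, so covering the quasi-compact $U$ by finitely many such neighbourhoods and taking the least common multiple of the exponents produces a single $r \geq 1$ with $D := r\overline{D_0}$ Cartier on all of $U$. It then remains to verify bigness. Since $\cO_U(mD)|_{U_0} = \psi^{\ast}\cO_{\bP^n}(mr)$ and, by normality, restriction identifies $\rH^0(U,\cO_U(mD))$ with $\rH^0(U_0,\psi^{\ast}\cO(mr))$, the subsystem pulled back from $\rH^0(\bP^n,\cO(mr))$ realises $\psi$ followed by a Veronese re-embedding of $V$; this is birational onto its image, which has dimension $\dim V = \dim U$, so $D$ is big by the subsystem principle. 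The main obstacle is precisely this last interplay between normality (guaranteeing both the codimension-$\geq 2$ domain of $\psi$ and the extension of sections across it) and $\bQ$-factoriality (to pass from a Weil divisor to a globally Cartier one); the other two cases are comparatively immediate.
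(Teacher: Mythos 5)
Your proof is correct; cases \ref{propitem:quasiprojective} and \ref{propitem:product} coincide with the paper's arguments (ample divisors are big, and sums of pullbacks of big divisors are big), just written out in detail, but in case \ref{propitem:locallyfactorial} you take a genuinely different route. The paper picks an effective big Cartier divisor $D$ on a dense affine open $V \subseteq U$, uses that the boundary $B = U \setminus V$ is a Weil divisor of pure codimension one \emph{because $V$ is affine}, makes suitable multiples of both the closure $D'$ of $D$ and of $B$ Cartier via local $\bQ$-factoriality, and then extends the sections $s_0,\dots,s_d \in \rH^0(V,mD)$ defining a generically finite map to sections of $mD'+mrB$ for $r \gg 0$: their poles along the boundary are absorbed by twisting with a large multiple of $B$. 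You avoid any appeal to the divisorial structure of the boundary: you extend the embedding $U' \inj \bP^n_k$ to a morphism $\psi$ on an open $U_0 \subseteq U$ whose complement has codimension $\geq 2$ (valuative criterion at codimension-one points of the normal $U$), pull back the hyperplane class, make its Zariski closure Cartier by $\bQ$-factoriality and quasi-compactness, and extend sections across $U \setminus U_0$ by the Hartogs ($S_2$) property of normal varieties. In effect the boundary poles are pre-packaged in $\psi^\ast H$, which already contains the components of $U_0 \setminus U'$ with the correct multiplicities, so only one Weil divisor has to be made $\bQ$-Cartier and no twisting is needed; the trade-off is that you need extension of rational maps to proper targets in codimension one and reflexive extension of sections, where the paper needs the (equally standard, but different) fact that complements of dense affine opens are purely one-codimensional. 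One further point in favour of the paper's formulation: its twist-by-the-boundary step is reused in Step 3 of the proof of Proposition~\ref{prop:main}, to extend $h^\ast D$ from $\tilde{U}$ to a big divisor on the compactification $\tilde{X}$; your mechanism proves the present proposition equally well, but that later application would need a separate argument.
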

\begin{proof} 
Since any ample divisor is big, the conclusion holds if we assume \ref{propitem:quasiprojective}. In case \ref{propitem:product}, the sum of the pull backs of big Cartier divisors on the factors is again big. 

If \ref{propitem:locallyfactorial} holds, then we first choose a dense affine open $V \subseteq U$ and an effective big Cartier divisor $D$ on $V$ by \ref{propitem:quasiprojective}. 
Let $B = U \setminus V$ be the boundary, in fact a Weil divisor since $V$ is affine, and let $D'$ be the Zariski closure of $D$ as a Weil divisor on $U$. By assumption, 
$mD'$ and $mB$ are both effective Cartier divisors 
for $m \gg 0$ 
and there are sections $s_0,\ldots, s_d \in \rH^0(V,mD)$ such that the induced map $V \to \bP^d_k$ is generically finite. For $r \gg 0$ the sections $s_i$ extend to sections of 
\[
\rH^0(U,mD + mrB)
\]
so that $mD + mrB$ is the desired big Cartier divisor on $U$.
\end{proof}

\subsection{Geometry of varieties with vanishing \texorpdfstring{$\rH^1$ and $\rH^2$}{first and second cohomology}} \label{sec:vanishh1h2}

Let $\ell$ be a prime number different from the characteristic of $k$ and let  $U/k$ be a variety with $\rH^2_\et(U,\mu_\ell) = 0$.
It follows from the Kummer sequence in \'etale cohomology, that $\Pic(U)$ is an $\ell$-divisible abelian group.

\smallskip

The following 
crucially depends on $k$ being a field of positive characteristic.

\begin{prop} \label{prop:noh0}
Let $k$ be of characteristic $p> 0$. If $U/k$ is a connected reduced variety such that $\pi_1^\ab(U) \otimes \bF_p$ is finite, then $\rH^0(U,\dO_U) = k$. 
\end{prop}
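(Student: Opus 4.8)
The plan is to push everything through the Artin--Schreier sequence, which is the only ingredient sensitive to characteristic $p$. First I would use the short exact sequence of \'etale sheaves $0 \to \bF_p \to \bA^1 \xrightarrow{F-\id} \bA^1 \to 0$ on $U$, where $F$ is the $p$-power Frobenius on $\dO_U$ and $F-\id$ is \'etale-surjective because it is the separable isogeny $x \mapsto x^p-x$ of $\bG_{\ra}$. Writing $A = \rH^0(U,\dO_U)$ and taking \'etale cohomology gives
\[
0 \to \rH^0_\et(U,\bF_p) \to A \xrightarrow{F-\id} A \to \rH^1_\et(U,\bF_p).
\]
Since $U$ is connected and reduced, $\rH^0_\et(U,\bF_p)=\bF_p$, so $F-\id$ has kernel $\bF_p$ and, crucially, its cokernel injects into $\rH^1_\et(U,\bF_p)$. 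Now $\rH^1_\et(U,\bF_p)=\Hom(\pi_1(U),\bF_p)=\Hom(\pi_1^\ab(U)\otimes\bF_p,\bF_p)$ is finite exactly because $\pi_1^\ab(U)\otimes\bF_p$ is assumed finite. Hence $\coker(F-\id\colon A\to A)$ is a finite-dimensional $\bF_p$-vector space, and the statement reduces to the purely algebraic claim: if $A$ is a reduced connected $k$-algebra with $A\neq k$, then $\coker(F-\id\colon A\to A)$ is infinite-dimensional over $\bF_p$.

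Next I would make two reductions. Because $k$ is algebraically closed and $A$ is reduced, any $a\in A$ algebraic over $k$ already lies in $k$: its subalgebra $k[a]$ would be a finite reduced $k$-algebra, hence a product of copies of $k$, and since $A$ is connected it admits no nontrivial idempotents, forcing $k[a]=k$. So $A\neq k$ provides a transcendental $f\in A$. Then I would reduce to $A$ a domain: the embedding $A\hookrightarrow\prod_i A/\fp_i$ of the reduced ring over its minimal primes shows that if every quotient $A/\fp_i$ were $k$, then $A$ would be a subring of a product of copies of $k$ without nontrivial idempotents, hence $A=k$; so some $D:=A/\fp_i$ is a domain with $D\neq k$, and the surjection $A\surj D$ induces a surjection $\coker(F-\id\colon A\to A)\surj\coker(F-\id\colon D\to D)$. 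It therefore suffices to make the cokernel infinite for a domain $D\neq k$ over $k$.

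The engine is a pole-order estimate. Suppose I have a rank-one valuation $v$ on the fraction field of $D$, trivial on $k$, with value group $\bZ$, and an element $t\in D$ with $v(t)=-e<0$ and $p\nmid e$. For any $h\in D$ the identity $(F-\id)(h)=h^p-h$ gives $v(h^p-h)=p\,v(h)$ when $v(h)<0$, while $v(h)\geq 0$ forces $v(h^p-h)\geq 0$; thus every element of $(F-\id)(D)$ of negative valuation has valuation in $p\bZ$. Consequently the classes $[t^n]$ for $n\geq 1$ with $p\nmid n$ are $\bF_p$-linearly independent in $\coker(F-\id)$: a nontrivial finite combination $\sum c_n t^n$ has valuation $-e\cdot\max\{n : c_n\neq 0\}$, which is negative and prime to $p$ (as $p\nmid e$), so it cannot lie in the image. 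This yields infinitely many independent classes and completes the reduction.

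The hard part is producing the pair $(v,t)$, that is, a \emph{global} regular function with pole order prime to $p$ at some divisorial valuation; this is precisely where wild ramification in characteristic $p$ threatens the argument, since Frobenius can push all visible pole orders into $p\bZ$. The cleanest route is to reduce to dimension one: passing to a one-dimensional domain quotient and choosing a boundary point $P$ on the smooth projective model, Riemann--Roch guarantees that the regular functions realise every sufficiently large pole order at $P$, hence infinitely many prime to $p$, which already suffices. In higher dimension one must either justify such a slicing or refine the estimate by tracking leading coefficients in the residue field of a boundary divisor, which is imperfect once $\dim U\geq 2$ and therefore detects non-$p$-th-powers; controlling these leading terms for global functions is the main technical obstacle I expect to spend the most effort on.
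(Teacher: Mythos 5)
Your Artin--Schreier reduction, the passage to a domain quotient, and the valuation lemma are all sound (modulo the small unverified point that $A=\rH^0(U,\dO_U)$ has only finitely many minimal primes, which does hold because $A$ embeds into the finite product of the function fields of the irreducible components of $U$). But the step you defer to the end is a genuine gap, and in two respects it is worse than a technicality. First, the ``purely algebraic claim'' you reduce to --- that $\coker(F-\id\colon D\to D)$ is infinite for every domain $D\neq k$ over $k$ --- is false as stated: for $D=\overline{k(x)}$, an algebraically closed field and hence a connected reduced $k$-algebra $\neq k$, every equation $h^p-h=g$ is solvable, so the cokernel is zero; correspondingly no $\bZ$-valued valuation of the kind your engine needs exists on such a $D$. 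Any correct completion must therefore use the finiteness your reduction silently discards, namely that $D=A/\fp_i$ embeds into the function field of an irreducible component of $U$, so its fraction field lies in a finitely generated extension of $k$. Second, your proposed completion by slicing to dimension one tacitly assumes $D$ is a finitely generated $k$-algebra, so that a one-dimensional quotient has a smooth projective model on which Riemann--Roch applies; but $\rH^0(U,\dO_U)$ need not be finitely generated (the Rees--Nagata phenomenon around Hilbert's fourteenth problem), and neither need its domain quotients be, so this route does not close the gap either.

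The repair --- and this is exactly the mechanism of the paper's proof --- is to stop trying to produce classes that suffer no loss and instead bound the loss. Take $t\in D$ transcendental over $k$ and consider $\phi\colon k[t]/(F-\id)k[t]\to D/(F-\id)D$. If $g(t)=h^p-h$ with $h\in D$, then $h$ is a root of $X^p-X-g(t)$, hence algebraic (indeed separable) over $k(t)$, hence lies in the algebraic closure $L$ of $k(t)$ inside the fraction field of $D$; since that fraction field sits in a finitely generated field, $[L:k(t)]<\infty$, and the kernel of $\phi$ injects into $\Hom(\Gal(M/k(t)),\bF_p)$ for $M$ the Galois closure of the separable part of $L$, which is finite. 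Since $k[t]/(F-\id)k[t]$ is infinite (your valuation argument at infinity proves exactly this), $D/(F-\id)D$ is infinite, and wild ramification never has to be controlled: it can kill at most finitely many classes. The paper runs this same argument one level up, in a single stroke: a non-constant $f\in\rH^0(U,\dO_U)$ gives a dominant map $f\colon U\to\bA^1_k$, a dominant map induces a finite-index image on $\pi_1$ (the finiteness of $[L:k(t)]$ above is precisely why), hence a finite-index image of the finite group $\pi_1^\ab(U)\otimes\bF_p$ inside the infinite group $\pi_1^\ab(\bA^1_k)\otimes\bF_p$ --- a contradiction. Your cohomological setup is the Pontryagin dual of that argument, and with the finite-kernel observation in place of the search for a pole order prime to $p$, it closes.
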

\begin{proof}
We argue by contradiction. If  $f: U \to \bA^1_k$ is a dominant map, then the induced map 
\[
f_\ast : \pi^\ab_1(U) \otimes \bF_p \to \pi^\ab_1(\bA^1_k) \otimes \bF_p
\]
has image of finite index in the infinite group $\pi_1^\ab(\bA^1_k) \otimes \bF_p$, a contradiction.
\end{proof}

By the duality $\rH^1_\et(U,\bF_p) = \Hom(\pi^\ab_1(U), \bF_p)$, the vanishing of $\rH^1_\et(U,\bF_p)$ implies the assumption of Proposition~\ref{prop:noh0}.

\subsection{Using alterations} 
\label{sec:proof}

Section~\S\ref{sec:vanishh1h2} reduces the proof of Theorem~\ref{thm:main2connected} in the presence of a big Cartier divisor to the following proposition.

\begin{prop} \label{prop:main}
Let $k$ be an algebraically closed field and let $U/k$ be a connected normal variety with a big Cartier divisor and such that 
\begin{enumer}
\item \label{propitem:vanishHo}
$\rH^0(U,\dO_U) = k$, and
\item  \label{propitem:picelldiv}
there is a prime number $\ell$ such that $\Pic(U)$ is $\ell$-divisible.
\end{enumer}
Then $U$ has dimension $0$.
\end{prop}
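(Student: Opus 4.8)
The plan is to argue by contradiction: assume $\dim(U) = d \geq 1$ and extract a contradiction from the coexistence of a big divisor class that is infinitely $\ell$-divisible with the rigidity forced by $\rH^0(U,\dO_U) = k$. Write $L = \dO_U(D)$. Since $\Pic(U)$ is $\ell$-divisible, the class $[L]$ is infinitely $\ell$-divisible: there are line bundles $L_n$ with $[L] = \ell^n [L_n]$ for all $n$.

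First I would pass to a good proper model. Choosing a normal projective compactification $U \subseteq X$ and applying de Jong's theorem on alterations, I obtain a smooth projective variety $X'$ together with a proper generically finite surjection $\pi : X' \to X$; after further blow-ups I may assume $Y' := \pi^{-1}(X \setminus U)$ is a divisor with irreducible components $Y_1', \dots, Y_r'$, so that $U' := X' \setminus Y' = \pi^{-1}(U)$ and $f := \pi|_{U'} : U' \to U$ is proper and generically finite. Three properties survive this pullback. (a) Bigness: $L' := f^\ast L$ is big on $U'$, since Iitaka dimension is preserved under generically finite dominant maps. (b) Divisibility of the class: $[L'] = f^\ast[L] = \ell^n f^\ast[L_n]$, so $[L']$ is again infinitely $\ell$-divisible in $\Pic(U')$. (c) The coherent condition (i): using the Stein factorization of the proper map $f$ together with normality, global regular functions on $U'$ are integral over $\rH^0(U,\dO_U) = k$, whence $\rH^0(U',\dO_{U'}) = k$. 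From (c) I read off the decisive boundary rigidity: any $s \in \rH^0(X', \dO_{X'}(B))$ with $B$ effective and supported on $Y'$ is a rational function with poles only along $Y'$, hence restricts to an element of $\rH^0(U',\dO_{U'}) = k$; thus $h^0(X', \dO_{X'}(B)) = 1$ for every such $B$, and in particular $\operatorname{vol}_{X'}(\beta) = 0$ for every effective $\bR$-divisor $\beta$ supported on $Y'$.

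Next I would transport bigness and divisibility to $\NS(X')$. Because $D'$ is big on $U'$, one checks that a suitable boundary modification $\bar L' := \dO_{X'}(\overline{D'} + m Y')$ of the closure is big on $X'$ (a section of $mL'$ on $U'$ acquires poles of order $O(m)$ along $Y'$, so it already lies in $\bar L'$ for the twist linear in $m$). Since $X'$ is regular, $\Pic(X') \to \Pic(U')$ is surjective with kernel generated by the boundary classes $[Y_j']$. Lifting the relation $[L'] = \ell^n[L_n']$ yields $[\bar L'] \equiv \ell^n [\text{a lift of } L_n'] \pmod{\Sigma}$ in $\NS(X')$, where $\Sigma := \sum_j \bZ [Y_j']$. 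As $\NS(X')$ is finitely generated, an element infinitely $\ell$-divisible modulo $\Sigma$ is torsion modulo $\Sigma$; hence $[\bar L'] \in \Sigma_\bR$, i.e. $[\bar L'] \equiv \sum_j c_j [Y_j']$ numerically with real $c_j$. Splitting $\sum_j c_j Y_j' = \beta_+ - \beta_-$ into effective positive and negative parts (both supported on $Y'$, no common components), the numerical bigness of $\bar L'$ gives that $\bar L' + \beta_- \equiv \beta_+$ is big, since adding an effective class to a big class preserves bigness. Thus $\beta_+$ is a big effective $\bR$-divisor supported on $Y'$, so $\operatorname{vol}_{X'}(\beta_+) > 0$, contradicting $\operatorname{vol}_{X'}(\beta_+) = 0$ from the boundary rigidity. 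This forces $d = 0$.

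The main obstacle, I expect, is not the final numerical play-off but the passage through the alteration: securing that $\rH^0 = k$ and bigness descend to (respectively are produced on) the smooth projective model $X'$, and in particular producing a genuinely big extension $\bar L'$ on $X'$ rather than a divisor whose Iitaka dimension has leaked entirely into the boundary. The supporting numerical inputs — that the volume is a numerical invariant and that big implies pseudoeffective — hold over an arbitrary field, so they pose no difficulty in characteristic $p$.
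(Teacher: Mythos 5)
Your argument is, in substance, the same as the paper's: pass to a smooth projective model via an alteration, transfer the condition $\rH^0=k$ to it by an integrality argument, use finite generation of the N\'eron--Severi group to convert $\ell$-divisibility of $\Pic(U)$ into the statement that the extended big class is numerically supported on the boundary, and then contradict bigness because sections of boundary-supported divisors restrict to constants off the boundary. However, your opening move has a genuine gap: you ``choose a normal projective compactification $U \subseteq X$''. Such a compactification exists if and only if $U$ is quasi-projective, and the proposition deliberately does not assume this --- the whole point of working with big Cartier divisors rather than ample ones is to cover non-quasi-projective varieties. For instance, a smooth complete non-projective variety is locally factorial, hence has a big Cartier divisor by Proposition~\ref{prop:existbig}, yet admits no projective compactification; for such $U$ your first step cannot be carried out. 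The repair is standard and is exactly what the paper does: either compactify with Nagata's theorem (which yields a \emph{proper}, not projective, $X$) and then apply de Jong's alteration theorem to the proper $X$ to obtain a smooth \emph{projective} alteration $X'$, or invoke \cite[Theorem~7.3]{Jon96} directly, which produces an alteration $\tilde U \to U$ together with an open immersion $\tilde U \subseteq \tilde X$ into a smooth projective variety. The rest of your proof uses projectivity only of $X'$, never of $X$, so it goes through after this correction.

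Your remaining deviations from the paper are harmless. You obtain $\rH^0(U',\dO_{U'})=k$ via Stein factorization of the proper map $f$, whereas the paper restricts to the maximal open $V\subseteq U$ over which the alteration is finite and uses that $\rH^0(V,\dO_V)=\rH^0(U,\dO_U)$ by normality; both reduce to the same minimal-polynomial argument over the normal base. Your final numerical play-off with volumes of effective $\bR$-divisors is correct but can be streamlined as in the paper: once a positive multiple of $\bar L'$ is numerically equivalent to an \emph{integral} divisor $B$ supported on $Y'$ (effective or not), bigness of $B$ is already contradicted by the inclusion $\bigcup_{n\geq 0}\rH^0\big(X',\dO_{X'}(nB)\big) \subseteq \rH^0(U',\dO_{U'})=k$, since such sections have poles only along $Y'$; no splitting into positive and negative parts, and no $\bR$-coefficients, are needed.
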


\begin{proof} 
By \cite[Theorem~7.3]{Jon96}, there exists an alteration, i.e., a generically finite projective map $h: \tilde U \to U$ such that $\tilde U$ can be embedded into a connected smooth projective variety $\tilde X$. 

\smallskip

\textit{Step 1:}
The maximal open $V \subset U$, such that the restriction $\tilde{V}=h^{-1}(V) \to V$
is a finite map, has boundary $U \setminus V$ of codimension at least $2$, since $U$ is normal.

The $k$-algebra $\rH^0(\tilde{V},\dO_{\tilde{V}})$ is an integral domain inside the function field of $\tilde{V}$. The minimal polynomial for $s \in \rH^0(\tilde{V},\dO_{\tilde{V}})$ with respect to the function field of $V$ has coefficients that are regular functions on $V$ by normality and uniqueness of the minimal polynomial. Hence these coefficients are elements of 
$\rH^0(V,\dO_V) = \rH^0(U,\dO_U) = k$, and so 
\[
\rH^0(\tilde{V},\dO_{\tilde{V}}) = k.
\]

\smallskip

\textit{Step 2:}  
By  the theorem of the base \cite[Theorem~5.1]{kleiman:pic-sga6},  the N\'eron--Severi group 
\[
\NS(\tilde{X}) = \Pic(\tilde{X})/\Pic^0(\tilde{X})
\]
is a finitely generated abelian group. 
Since the restriction map $\Pic(\tilde{X}) \surj \Pic(\tilde{U})$ is surjective, the induced composite map 
\begin{equation} \label{eq:finiteimage}
h^\ast : \Pic(U) \to \coker\big(\Pic^0(\tilde{X}) \to \Pic(\tilde{U})\big)
\end{equation}
maps an $\ell$-divisible group to a finitely generated  abelian group, hence has finite image. 

\smallskip

\textit{Step 3:}  Let $D$ be a big Cartier divisor on $U$. Since $h: \tilde{U} \to U$ is generically finite, also the divisor $h^\ast{D}$ is a big Cartier divisor. 
Moreover, as in the proof of Proposition~\ref{prop:existbig}, there is a big divisor $\tilde{D}$ on $\tilde{X}$ that restricts to $h^\ast D$ on $\tilde{U}$. Upon replacing $D$ and $\tilde{D}$ by a positive multiple we may assume, by the finiteness of the image of the map \eqref{eq:finiteimage}, that $\tilde{D}$ is algebraically and thus numerically equivalent to a divisor $B$ on $\tilde{X}$ that is supported in $\tilde{X} \setminus \tilde{U}$.

Since bigness on projective varieties only depends on the numerical equivalence class, see \cite[Corollary~2.2.8]{Laz04}, the divisor $B$ is also big.  Restriction to $\tilde{V}$ yields
\[
\bigcup_{n \geq 0} \rH^0(\tilde X,\dO_{\tilde X}(nB)) \subseteq \rH^0(\tilde V, \dO_{\tilde V}) = k
\]
by Step 1 above. We conclude that $\dim(U) = \dim (\tilde{X}) = 0$ by bigness of $B$.
\end{proof}

\subsection{Complementing example} 

We illustrate the importance of the presence of a big divisor in Theorem~\ref{thm:main2connected} or Proposition~\ref{prop:main} by an example from toric geometry.

We first recall two facts about complete toric varieties that are standard analytically over $\bC$ and which have \'etale counterparts for toric varieties over arbitrary algebraically closed base fields, in particular of characteristic $p > 0$.

\begin{lem} \label{lem:toricsimplyconnected}
Let $k$ be an algebraically closed field. Any complete toric variety $X/k$ is \'etale simply connected: $\pi_1(X) = \one$.
\end{lem}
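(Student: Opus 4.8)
The plan is to prove that any complete toric variety $X/k$ has trivial étale fundamental group by exploiting the combinatorial structure of the torus action. First I would recall that $X$ contains a dense open torus $T = \Gm^n$ and is stratified into torus orbits indexed by the cones of the defining fan $\Sigma$, with each orbit isomorphic to a quotient torus. The key structural input is that the big torus $T$ is a group, so its action provides many automorphisms, and the zero-dimensional orbit corresponding to a maximal cone gives a $T$-fixed point $x_0 \in X$.

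The main strategy would be to use the Seifert--van Kampen type behaviour of $\pi_1$ together with the affine charts. For each maximal cone $\sigma$ of $\Sigma$, the associated affine toric variety $U_\sigma = \Spec(k[\sigma^\vee \cap M])$ is, after a choice of coordinates, a product of affine spaces and tori; since $X$ is complete the fan is complete, so every maximal cone is $n$-dimensional and $U_\sigma \cong \bA^n_k$ in the smooth case, or more generally an affine toric variety with a unique fixed point. The crucial point is that these affine charts, being cones over a full-dimensional $\sigma$, are \emph{étale simply connected}: an affine space is étale simply connected, and more generally $U_\sigma$ admits a contracting $\Gm$-action (scaling toward the fixed point) which I would use to show $\pi_1(U_\sigma) = \one$. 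I would then cover $X$ by these charts and argue that the fundamental group of the union is trivial, since the charts overlap along the torus $T$ and the generators of $\pi_1(T) \cong \hat{\bZ}^n(1)$ (tame part) coming from loops around the coordinate hyperplanes all become trivial once one passes into a chart where the corresponding orbit is filled in.

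Concretely, I expect the cleanest route is via the orbit--cone correspondence: a connected finite étale cover $X' \to X$ restricts to a finite étale cover of the dense torus $T$, hence corresponds to a finite-index subgroup of $\pi_1(T)$, i.e. to an isogeny of tori pulled back along $T \hookrightarrow X$. The obstruction to such a cover extending (and hence to its nontriviality) lives at the boundary divisors $D_\rho$ corresponding to the rays $\rho \in \Sigma(1)$. Purity / Abhyankar-type ramification analysis along each toric boundary divisor forces the cover to be unramified there only if the corresponding character extends, and completeness of the fan (every ray participates in a maximal cone) then kills all the monodromy. I would assemble this to conclude that any such cover is already trivial over $T$, so $\pi_1(X) = \one$.

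The hard part will be handling the non-smooth case uniformly, where $U_\sigma$ may be singular and the naive statement ``affine chart is simply connected'' is less transparent; here the finite quotient description $U_\sigma = \bA^n_k / G$ for a finite abelian $G$ (arising from the non-saturated sublattice) could introduce spurious fundamental group, so I must check that such quotient singularities do not contribute étale $\pi_1$ in a way that survives globally — essentially because the $\Gm$-contraction to the fixed point still trivializes loops. The characteristic $p$ subtlety (wild ramification, infinite $\pi_1(\bA^1)$) is precisely what one must confirm does \emph{not} intrude: the relevant covers of a complete toric variety are tame because they are controlled by the torus $T$ whose tame fundamental group is $\hat{\bZ}^{(p')}{}^{\,n}$, and the boundary analysis is combinatorial rather than wildly ramified. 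Verifying that no wild phenomena enter, despite the positive-characteristic setting, is the step I would scrutinize most carefully.
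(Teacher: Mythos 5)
Your proposal has a fatal gap, and it sits exactly at the phenomenon this paper is about. The central local claim --- that the charts $U_\sigma$ for full-dimensional cones are \'etale simply connected because ``an affine space is \'etale simply connected'' and because of a contracting $\Gm$-action --- is \emph{false} in characteristic $p>0$: the group $\pi_1(\bA^n_k)$ is infinite, due to Artin--Schreier covers such as $y^p-y=x$, and indeed the infinitude of $\pi_1(\bA^1_k)$ is the starting point of the whole paper. A contracting $\Gm$-action does not trivialize the \'etale fundamental group, because \'etale homotopy theory is not $\bA^1$-invariant in characteristic $p$ (that failure is precisely the theme here). Consequently the chart-by-chart van Kampen assembly cannot work: every piece of your cover has an enormous fundamental group, and no gluing over $T$ can produce triviality from that. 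Your alternative ``cleanest route'' fails at its first identification as well: a finite-index subgroup of $\pi_1(T)$ does \emph{not} correspond to an isogeny of tori, since $T$ is affine and $\pi_1(\Gm^n)$ in characteristic $p$ has a huge wild part; the Serre--Lang description of covers as isogenies requires properness. Your closing paragraph correctly identifies that excluding wild covers is the crux, but the justification offered (``the relevant covers are tame because they are controlled by the torus whose tame fundamental group is $\ldots$'') is circular: tameness of covers extending over the boundary is exactly what must be proven, and it can only come from a \emph{global} properness argument, which your plan never supplies.

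For contrast, the paper's proof is a three-line global argument that packages all of this into known black boxes: take a toric resolution $\tilde X \to X$ with $\tilde X$ a \emph{smooth projective} toric variety; birational invariance of $\pi_1$ for smooth proper varieties gives $\pi_1(\tilde X)=\pi_1(\bP^n_k)=\one$ (smooth projective toric varieties are rational); and since $\tilde X \to X$ is proper, surjective and birational onto the normal variety $X$, the induced map $\pi_1(\tilde X)\surj\pi_1(X)$ is surjective, so $\pi_1(X)=\one$. The elimination of wild covers happens inside birational invariance (Zariski--Nagata purity plus properness), not inside any local contraction. If you want to salvage a combinatorial, chart-based proof along your lines, you would in effect have to reprove that mechanism, and the natural repair leads you back to the resolution argument.
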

\begin{proof}
By toric resolution, see \cite[\S2.6]{Ful93},
there is a resolution of singularities $\tilde{X} \to X$ with a smooth projective toric variety $\tilde{X}$. 
Birational invariance of the \'etale fundamental group shows 
$\pi_1(\tilde{X}) = \pi_1(\bP^n_k) = \one$, and the surjection $\pi_1(\tilde{X}) \surj \pi_1(X)$ shows that $X$ is \'etale $1$-connected.
\end{proof}

\begin{lem} \label{lem:toricH2andPic}
Let $k$ be an algebraically closed field of characteristic $p$, and let $X/k$ be a complete toric variety. Then for all $\ell \not=p$ we have 
\[
\rH^2_\et(X,\bZ_\ell(1)) \simeq \Pic(X) \otimes \bZ_\ell.
\]
\end{lem}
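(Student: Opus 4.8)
The plan is to extract $\Pic$ from the Kummer sequence and to show that the only possible error term, the $\ell$-adic Tate module of the Brauer group, vanishes. Concretely, for every $n\geq 1$ the Kummer sequence $1\to\mu_{\ell^n}\to\Gm\xrightarrow{\ell^n}\Gm\to 1$ on $X_\et$ yields, using $\rH^1_\et(X,\Gm)=\Pic(X)$, a short exact sequence
\[
0\to\Pic(X)/\ell^n\Pic(X)\to\rH^2_\et(X,\mu_{\ell^n})\to\Br(X)[\ell^n]\to 0,
\]
where $\Br(X)=\rH^2_\et(X,\Gm)_{\tors}$. I would then pass to the inverse limit over $n$. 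Since $\Pic(X)$ of a complete toric variety is a finitely generated free abelian group, one has $\varprojlim_n\Pic(X)/\ell^n\Pic(X)=\Pic(X)\otimes\bZ_\ell$ with vanishing $\varprojlim^1$, while $\varprojlim_n\rH^2_\et(X,\mu_{\ell^n})=\rH^2_\et(X,\bZ_\ell(1))$ because the groups involved are finite. This produces the exact sequence
\[
0\to\Pic(X)\otimes\bZ_\ell\to\rH^2_\et(X,\bZ_\ell(1))\to T_\ell\Br(X)\to 0,
\]
so the lemma is equivalent to the vanishing of the torsion-free $\bZ_\ell$-module $T_\ell\Br(X)=\varprojlim_n\Br(X)[\ell^n]$.

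Since $T_\ell\Br(X)$ is torsion-free, it suffices to show it vanishes rationally, i.e. that $\dim_{\bQ_\ell}\rH^2_\et(X,\bQ_\ell(1))=\Rk\Pic(X)$; one inequality is immediate because, after tensoring with $\bQ_\ell$, the first Chern class embeds $\Pic(X)\otimes\bQ_\ell$ into $\rH^2_\et(X,\bQ_\ell(1))$. For the reverse inequality I would reduce to the smooth case by the toric resolution $\tilde X\to X$ of Lemma~\ref{lem:toricsimplyconnected}, with $\tilde X$ a smooth projective toric variety. A smooth projective toric variety admits an affine paving by the Bialynicki--Birula cells of a generic one-parameter subgroup of the torus; hence its $\ell$-adic cohomology is torsion-free, concentrated in even degrees, and spanned by algebraic cycle classes, so that $\rH^2_\et(\tilde X,\bZ_\ell(1))=\Pic(\tilde X)\otimes\bZ_\ell$ and in particular $T_\ell\Br(\tilde X)=0$.

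It remains to descend this from $\tilde X$ to the possibly singular $X$, and this is the step I expect to be the main obstacle. The naive pullback $\pi^\ast$ only bounds $\dim_{\bQ_\ell}\rH^2_\et(X,\bQ_\ell(1))$ by $\Rk\Pic(\tilde X)$, which is too large; what is needed is that every class on $X$ is already algebraic. Here I would use that $\pi$ is generically finite of degree $1$, so that a trace satisfies $\pi_\ast\pi^\ast=\id$ on $\rH^2_\et(X,\bQ_\ell(1))$, writing each class as a $\bQ_\ell$-combination of pushforwards of the divisor classes spanning $\rH^2_\et(\tilde X,\bQ_\ell(1))$; the delicate point is the existence and compatibility of the pushforward $\pi_\ast$ for the singular target, for which I would invoke that toric varieties have rational singularities. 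Alternatively, one may bypass the descent by a specialisation argument: the toric scheme attached to the fan is proper and flat over $\bZ[1/\ell]$, so proper base change for $\bZ/\ell^n$-coefficients identifies $\rH^2_\et(X,\bZ_\ell(1))$ with the corresponding group of the complex toric variety of the same fan, where the exponential sequence together with the Demazure vanishing $\rH^{>0}(X,\dO_X)=0$ gives $\rH^2(X,\bZ)=\Pic(X)$ directly. In either route the positive characteristic of $k$ plays no role once $\ell\neq p$, which is exactly the sense in which the analytic statement has an \'etale counterpart.
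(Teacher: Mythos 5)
Your Kummer-sequence reduction is correct and well organized: since $\Pic(X)$ of a complete toric variety is finitely generated and free, the lemma is indeed equivalent to the vanishing of $T_\ell\Br(X)$, i.e.\ to the algebraicity of all of $\rH^2_\et(X,\bQ_\ell(1))$, and your treatment of the smooth projective toric case via the Bialynicki--Birula paving is fine. The genuine gap is exactly where you place it --- descent from $\tilde X$ to the singular $X$ --- and neither of your proposed repairs works as stated. For the trace route: a pushforward $\pi_\ast$ on $\rH^2_\et$ satisfying $\pi_\ast\pi^\ast=\id$ does not exist for proper, generically degree-one maps onto singular targets; its existence would force $\pi^\ast$ to be injective on cohomology, which already fails for the normalization $\bP^1\to C$ of a nodal cubic $C$, where $\rH^1_\et(C,\bQ_\ell)\neq 0$ pulls back to zero. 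Rational singularities cannot rescue this: it is a coherent-cohomology condition ($\R\pi_\ast\dO_{\tilde X}=\dO_X$) and produces no map of $\ell$-adic cohomology groups, and non-simplicial toric varieties are not $\bQ$-homology manifolds (e.g.\ the cone over a quadric surface), so no Poincar\'e-duality pushforward is available either. Worse, even granting some cycle-level $\pi_\ast$, the pushforward of a Cartier divisor on $\tilde X$ is only a Weil divisor on $X$, which need not be $\bQ$-Cartier and hence has no class in $\Pic(X)\otimes\bQ_\ell$; this is precisely the issue in the examples the lemma is applied to, where $\Pic(X)=0$ while $\tilde X$ carries many divisor classes.

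Your alternative specialization route aims at a true statement but invokes the wrong theorem: proper base change computes the stalks of $\R^qf_\ast\mu_{\ell^n}$ fibrewise, but it does not make this sheaf locally constant on $\Spec(\bZ[1/\ell])$, and for proper non-smooth families the cohomology of geometric fibres can genuinely jump. Since the toric scheme over $\bZ$ is smooth only when $X$ is, local constancy must be extracted from the toric structure itself --- say by induction over the stratification by torus orbits (split tori over $\bZ$, whose fibrewise cohomology is constant), or by toric resolution plus cohomological descent. Filling this in is a real argument, and it is essentially the same work as the paper's own proof, which takes a different and shorter route: it cites Fulton's Corollary in \S 3.4 (the singular-cohomology statement over $\bC$) and observes that Fulton's argument, resting on the toric combinatorics that identify both $\Pic(X)$ and $\rH^2$ with the same fan-theoretic data, runs in parallel in \'etale cohomology over any algebraically closed field of characteristic $\neq\ell$; no resolution, no Brauer group, and no specialization are needed there.
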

\begin{proof}
In the context of toric varieties over $\bC$ and with respect to singular cohomology this is \cite[Corollary in 3.4]{Ful93}  
The $\ell$-adic case for toric varieties over an algebraically closed field $k$ of characteristic $\not= \ell$ follows with a parallel proof.
\end{proof}

\begin{ex}
Let $U=X$ be a complete normal non-projective toric variety $X$ of dimension $3$ with trivial Picard group. Such toric varieties have been constructed in \cite[Example 3.5]{Eik92}, or \cite[pp.~25--26, 65]{Ful93}. These sources construct $X$ over $\bC$ but the constructions work mutatis mutandis over any algebraically closed base field $k$. Then
\begin{enumer}
\item $\rH^1_\et(X,\bF_p)  = 0$  by Lemma~\ref{lem:toricsimplyconnected}, and
\item $\rH^2_\et(X,\bZ_\ell(1)) = 0$ for all $\ell \not=p$ by Lemma~\ref{lem:toricH2andPic}, and since there is non-trivial torsion in $\ell$-adic cohomology only for finitely many primes \cite{gabber:torsion}, we conclude that $\rH^2(X,\mu_\ell) = 0$ for almost all $\ell \not= p$. 
\end{enumer}
Therefore the assumptions of Theorem~\ref{thm:main2connected} hold with the exception of the presence of a big Cartier divisor. Nevertheless, these toric varieties are not \'etale contractible since $\rH^6_\et(X,\bZ_\ell(3)) = \bZ_\ell$. 
\end{ex}

\section{Normal surfaces}   
\label{sec:surfaces}
In this section, Proposition~\ref{prop:surfaces} completes the proof of Theorem~\ref{thm:main2connected} 
for surfaces.  Not every normal surface admits a big Cartier divisor, so something needs to be done. Examples of proper normal surfaces with trivial Picard group, in particular without big divisors, can be found in \cite{nagata:nonprojective} and \cite{schroeer:nonprojectivesurface}. 
However, on a hypothetical normal $2$-contractible surface a specialisation argument allows us to conclude the existence of a big  Cartier divisor in general.

\begin{prop} \label{prop:surfaces}
There is no normal connected surface $U/k$ over an algebraically closed field $k$ of characteristic $p>0$ such that 
\begin{enumer}
\item $\rH^1_\et(U,\bF_p)  = 0$, and
\item $\rH^2_\et(U,\mu_\ell) = 0$ for some prime number $\ell\not=p$.
\end{enumer}
\end{prop}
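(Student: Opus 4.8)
The plan is to realise $U$ as a member of a family over a finite-type base, specialise to the prime field $\overline{\bF}_p$, and there reduce to Proposition~\ref{prop:main} by showing that normal surfaces over $\overline{\bF}_p$ are automatically locally $\bQ$-factorial.

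First I would record what the hypotheses already give. Since $\rH^1_\et(U,\bF_p) = \Hom(\pi_1^\ab(U),\bF_p) = 0$, the group $\pi_1^\ab(U)\otimes\bF_p$ vanishes, so Proposition~\ref{prop:noh0} yields $\rH^0(U,\dO_U)=k$; and $\rH^2_\et(U,\mu_\ell)=0$ makes $\Pic(U)$ an $\ell$-divisible group through the Kummer sequence. Thus $U$ satisfies every hypothesis of Proposition~\ref{prop:main} except possibly the existence of a big Cartier divisor. Consequently it suffices to produce a big Cartier divisor on $U$: Proposition~\ref{prop:main} would then force $\dim(U)=0$, contradicting that $U$ is a surface. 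This is precisely the point that fails for the non-projective examples cited above, which is why a separate argument is needed.

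Second, I would pass to $\overline{\bF}_p$. Spread $U$ out to a finite-type morphism $\cU \to S$ with $S$ an integral $\bF_p$-scheme of finite type whose geometric generic fibre is $U$, arranging that the fibres are geometrically normal, connected and two-dimensional over a dense open of $S$. \'Etale cohomology with our coefficients is compatible with specialisation over a dense open: for $\mu_\ell$ with $\ell\neq p$ this is smooth base change together with constructibility, while for $\bF_p$ in characteristic $p$ I would use the Artin--Schreier sequence $0\to\bF_p\to\dO\xrightarrow{F-1}\dO\to 0$ to express $\rH^1_\et(-,\bF_p)$ through the coherent cohomology of $\dO$ and the Frobenius, which is compatible with base change over a dense open. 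Choosing a closed point $s$ in the intersection of these dense opens and a geometric point above it, I obtain a normal connected surface $U_0$ over $\overline{\bF}_p$ with $\rH^1_\et(U_0,\bF_p)=0$ and $\rH^2_\et(U_0,\mu_\ell)=0$; by the first paragraph it is enough to give $U_0$ a big Cartier divisor.

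Finally, I would show that $U_0$ is locally $\bQ$-factorial everywhere, so that Proposition~\ref{prop:existbig}\ref{propitem:locallyfactorial} furnishes the big Cartier divisor and Proposition~\ref{prop:main} gives $\dim(U_0)=0$, the desired contradiction. Local $\bQ$-factoriality means that the divisor class group of the local ring $\dO_{U_0,x}$ at each necessarily isolated singular point $x\in\Sing(U_0)$ is torsion. Resolving the singularity, the exceptional locus is a curve $E=\bigcup E_i$ whose intersection matrix $(E_i\cdot E_j)$ is negative definite, and the local class group is controlled by the finite cokernel of this matrix together with $\Pic^0(E)$; but $\Pic^0(E)$ is a semi-abelian variety, and over $\overline{\bF}_p$ both $\Gm(\overline{\bF}_p)=\overline{\bF}_p^\times$ and the $\overline{\bF}_p$-points of an abelian variety are torsion groups. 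Hence every local class group is torsion and $U_0$ is $\bQ$-factorial.

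I expect the main obstacle to be this last step: identifying the local class groups of the surface singularities over $\overline{\bF}_p$ and proving they are torsion, which is exactly where positive characteristic and the prime field are indispensable. Over $\bC$, or over a large field of characteristic $p$ such as $\overline{\bF_p(t)}$, the relevant $\Pic^0$ can carry positive rank and $\bQ$-factoriality genuinely fails, as the cone over an elliptic curve already shows; this is what makes the reduction to $\overline{\bF}_p$ essential rather than cosmetic. A secondary technical point is the compatibility of $\rH^1_\et(-,\bF_p)$ with specialisation, where smooth base change is unavailable and one must argue through coherent cohomology and the Frobenius instead.
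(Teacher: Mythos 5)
The decisive gap is in your second step, the transfer of the characteristic-$p$ hypothesis to the special fibre. What your plan actually requires of $U_0$ is $\rH^1_\et(U_0,\bF_p)=0$ (so that Proposition~\ref{prop:noh0} gives $\rH^0(U_0,\dO_{U_0})=\overline{\bF}_p$, which Proposition~\ref{prop:main} needs). The tool you invoke for this --- Artin--Schreier plus ``coherent cohomology of $\dO$ is compatible with base change over a dense open'' --- does not exist here: the fibres of $\cU\to S$ are not proper, so $\R^i f_\ast\dO_{\cU}$ is only quasi-coherent, none of the semicontinuity or base-change theorems apply, and the jumping locus need not be contained in any proper closed subset. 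Worse, the direction you need runs against semicontinuity: writing $U_s=X_s\setminus Y_s$ for a compactification, one has $\rH^0(U_s,\dO)=\bigcup_n \rH^0(X_s,\dO(nY_s))$, which only grows under specialisation. A concrete example where it grows at \emph{every} closed point: let $C/\bF_p$ be an ordinary elliptic curve, $S=C$, and let the fibre of $\cU\to S$ at $P$ be the complement of the two disjoint sections in $\bP(\dO_C\oplus L_P)$ with $L_P=\dO_C(P-O)$, i.e.\ the $\Gm$-torsor $\underline{\Spec}\,\bigoplus_{n\in\bZ}L_P^{\otimes n}$. The geometric generic fibre has $\rH^0(\dO)=k$ because $L_{\bar\eta}$ is non-torsion, but at every point $P\in C(\overline{\bF}_p)$ the bundle $L_P$ is torsion and $\rH^0(U_P,\dO)$ acquires non-constant units; no dense open of $S$ works. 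So the hypotheses of Proposition~\ref{prop:main} cannot be pushed to $U_0$ by a base-change argument of the kind you describe, and this is exactly the step on which your contradiction rests. (Your treatment of the $\mu_\ell$-condition is fine in substance, but the relevant theorem is Deligne's generic base change \cite[Finitude, Theorem 1.9]{sga4.5}, not smooth base change: the family is neither smooth nor proper.)

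Your third step --- normal surfaces over $\overline{\bF}_p$ are locally $\bQ$-factorial --- is a true and known theorem (it appears in Tanaka's work on the MMP for surfaces in positive characteristic), but your justification is incomplete: the class group of $\dO_{U_0,x}$ is controlled not by $\Pic^0(E_{\redu})$ and the discriminant group alone, but by the Picard groups of the infinitesimal neighbourhoods $nE$ of the exceptional curve (equivalently, by $\Pic$ of the formal completion of the resolution along $E$); the successive kernels are coherent $\rH^1$-groups, which are $p$-torsion over $\overline{\bF}_p$ but of growing exponent, so one needs a stabilisation argument (e.g.\ Serre vanishing for $\dO(-nZ)|_Z$ with $Z$ an exceptional cycle such that $-Z$ is ample on $E$) before one may conclude torsion, plus the injectivity of $\mathrm{Cl}$ of the local ring into that of its completion. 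This part is fillable from the literature, so it is a secondary issue. By contrast, the paper sidesteps your fatal first gap by moving information only in the semicontinuity-compatible direction: it specialises nothing but the $\mu_\ell$-condition (legal by Deligne), combines it at a closed point with Artin's projectivity theorem \cite[Corollary~2.11]{artin:numericalcriteria} to produce a very ample divisor supported in the boundary $\cY_{\bar t}$, and then lifts that divisor back to the generic fibre using openness of relative ampleness \cite[Theorem~4.7.1]{ega3}. This makes the \emph{original} $U$ quasi-projective, so Proposition~\ref{prop:main} is applied to $U$ itself, where the characteristic-$p$ hypothesis is available by assumption and never needs to be specialised. If you want to rescue your outline, it is this directional asymmetry that you would have to build in.
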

\begin{proof} 
We argue by contradiction and assume that $U$ is a surface as in the proposition. 
By Nagata's embedding theorem and resolution of singularities for surfaces, $U$ is a dense open in a normal proper surface $X/k$ with boundary $Y = X \setminus U$ being a normal crossing divisor. Hence, $X$ is smooth in a neighbourhood of $Y$. 

By limit arguments we may spread out over an integral scheme $S$ of finite type over $\bF_p$, i.e., there is  
a proper flat $f:\cX \to S$, a relative Cartier divisor $\cY$ in $\cX/S$ with normal crossing relative to $S$ and complement $\cU = \cX \setminus \cY$ such that the following holds:
\begin{enumeral}
\item all fibres are normal proper surfaces, and 
\item there is a point $\eta : \Spec(k) \to S$ over the generic point of $S$ such that the fibre over $\eta$ agrees with the original $\cX_\eta = X$ together with $\cU_\eta = U$ and $\cY_\eta = Y$, and 
\item the set of irreducible components of the fibres of $\cY$ forms a constant system, and each component of $\cY$ is a Cartier divisor, and 
\item  the higher  direct image $\R^2 {f|_\cU}_\ast \mu_\ell$ is locally constant and commutes with arbitrary base change by 
\cite[Finitude, Theorem 1.9]{sga4.5}. 
\end{enumeral}

Since the generic stalk $\big(\R^2 {f|_\cU}_\ast \mu_\ell\big)_\eta = \rH_\et^2(U,\mu_\ell) = 0$  vanishes, 
we conclude that for all geometric points $\bar s \in S$ we have $\rH_\et^2(\cU_{\bar s},\mu_\ell) = 0$, where $\cU_{\bar s}$ is the fibre of $\cU \to S$ in $\bar s$. As in the proof of Proposition~\ref{prop:main} this implies that for every Cartier divisor $D$ on $\cX_{\bar s}$ there is an $m \geq 1$ and a Cartier divisor $E$ on $\cX_{\bar s}$ supported in $\cY_{\bar s}$ such that 
$mD \equiv E$ are numerically equivalent. 

\smallskip

We apply this insight to a geometric fibre $\cX_{\bar t}$ above a closed point $t \in S$. 
Since by  a theorem of Artin \cite[Corollary~2.11]{artin:numericalcriteria}, all proper normal surfaces over the algebraic closure of a finite field  are projective, 
we conclude that there is a very ample Cartier divisor $H_{\bar t}$ on $\cX_{\bar t}$ with support contained in $\cY_{\bar t}$. 

Let $\cH \inj \cX$ be the relative Cartier divisor with support in $\cY$ that specialises to $H_{\bar t}$. By \cite[Theorem~4.7.1]{ega3}, the divisor $\cH$ is ample relative to $S$ in an open neighbourhood of $t \in S$. Consequently, the normal proper surface $X$ is projective, and in particular $U$ admits a big divisor.
The part of Theorem~\ref{thm:main2connected} proven in Section~\S\ref{sec:proof} leads to a contradiction.
\end{proof}

\begin{rmk}
It follows from the proof of  Proposition~\ref{prop:surfaces} that any proper non-projective normal surface $X$ with trivial Picard group, in particular the examples of \cite{nagata:nonprojective} and \cite{schroeer:nonprojectivesurface}, must have $\rH^2_\et(X,\mu_\ell) \not=0$ and a forteriori must contain non-trivial $\ell$-torsion classes in the cohomological Brauer group $\Br(X)$ for all $\ell$ different from the  characteristic. The existence of nontrivial torsion classes in $\Br(X)$ under the above assumptions was proven by different methods in \cite[proof of Theorem 4.1 on page 453]{schroeer:azumaya}.
\end{rmk}


\end{document}